\date{\today}
\newtheorem{theorem}{Theorem}[section]
\newtheorem{proposition}[theorem]{Proposition}
\theoremstyle{definition}
\newtheorem{example}[theorem]{Example}%[section]
\newtheorem{remark}[theorem]{Remark}%[section]
\begin{document}

\title[On old and new classes of feebly compact spaces]{On old and new classes of feebly compact spaces}

\author[O.~Gutik]{Oleg~Gutik}
\address{Faculty of Mathematics, National University of Lviv,
Universytetska 1, Lviv, 79000, Ukraine}
\email{oleg.gutik@lnu.edu.ua,
ovgutik@yahoo.com}

\author[A.~Ravsky]{Alex~Ravsky}
\address{Pidstryhach Institute for Applied Problems of Mechanics and Mathematics of NASU, Naukova 3b, Lviv, 79060, Ukraine}
\email{alexander.ravsky@uni-wuerzburg.de}

\keywords{Compact, feebly compact, sequentially compact, $\omega$-bounded, totally countably compact, countably compact, countably pracompact, pseudocompact, sequentially pseudocompact, sequentially pracompact, totally countably pracompact, $\omega$-bounded-pracompact.}
%\todo{SR to OG: Add keywords}

\subjclass[2010]{54B10, 54D30, 54D50, 54D55}

\begin{abstract}
We introduce three new classes of countably pracompact spaces, consider their basic properties
%sequentially pracompact, totally countably pracompact, and $\omega$-pracompact spaces
and relations with other compact-like spaces.
\end{abstract}

\maketitle

\bigskip

\section{Definitions and relations}

In general topology we often investigate different classes of compact-like spaces
and relations between them, see, for instance, basic \cite[Chap. 3]{Engelking1989} and
general works \cite{vanDouwenReedRoscoeTree1991}, \cite{Matveev1998},
\cite{VaughanHSTT}, \cite{StephensonJr1984}, \cite{Lipparini2016}.
We consider the present paper as a next small step in this quest.

We shall follow the terminology of \cite{Engelking1989}. By $\mathbb{N}$ we shall denote the set of all positive integers.

A subset of a topological space $X$ is called \emph{regular open} if it equals the interior of its closure.
A space $X$ is \emph{quasiregular} if each nonempty open subset of $X$ contains closure of some nonempty open subset of $X$.

\subsection{Old classes} We recall that a topological space $X$ is said to be

\begin{itemize}
  \item \emph{semiregular} if $X$ has a base consisting of regular open subsets;
  %\item \emph{quasiregular} if for any non-empty open set $U\subset X$ there exists a non-empty open set $V\subset U$ such that $\operatorname{cl}_X(V) \subseteq U$;
  \item \emph{compact} if each open cover of $X$ has a finite subcover;
  \item \emph{sequentially compact} if each sequence $\{x_n\}_{n\in\mathbb{N}}$ of $X$ has a convergent subsequence in $X$;
  \item \emph{$\omega$-bounded} if each countable subset of $X$ has compact closure;
  \item \emph{totally countably compact} if each sequence of $X$ contains a subsequence with
  compact closure;
  \item \emph{countably compact} if each open countable cover of $X$ has a finite subcover;
  \item \emph{countably compact at a subset} $A\subseteq X$ if every infinite subset $B\subseteq A$  has  an  accumulation  point $x$ in $X$;
  \item \emph{countably pracompact} if there exists a dense subset $D$ in $X$  such that $X$ is countably compact at $D$;
  \item \emph{feebly $\omega$-bounded} if for each sequence $\{U_n\}_{n\in\mathbb{N}}$  of non-empty  open  subsets of $X$ there is a compact subset $K$ of
  $X$ such that $K\cap U_n\ne\varnothing$ for each $n$;
  \item \emph{selectively sequentially feebly compact} if for each sequence $\{U_n\}_{n\in\mathbb{N}}$ of non-empty  open  subsets of $X$ we  can  choose  a  point $x_n\in U_n$ for each $n\in\mathbb N$ such that the sequence $\{x_n\}$ has a convergent subsequence;
  \item \emph{selectively feebly compact}\footnote{Selectively sequentially feebly compact Tychonoff spaces were recently introduced and studied by Dorantes-Aldama and Shakhmatov in \cite{Dorantes-AldamaShakhmatov}. Also they considered selectively feebly compact Tychonoff spaces under the name \emph{selectively pseudocompact} spaces.
An equivalent property appeared a few years earlier in papers by Garc\i a-Ferreira with Ortiz-Castillo~\cite{Garcia-FerreiraOrtiz-Castillo2014} and with Tomita~\cite{Garcia-FerreiraTomita2015} under the title ``\emph{strong pseudocompactness}'', but since the term ``strongly pseudocompact'' is used  in~\cite{ArkhangelskiiGenedi1993, Dikranjan1994} to denote two different properties, we stick to a name for this property which reflects its ``selective'' nature and also matches the name of the previous ``selective'' property.}, if for each sequence $\{U_n\}_{n\in\mathbb{N}}$ of non-empty  open  subsets of $X$ we  can  choose  a  point  $x\in X$ and  a  point $x_n\in U_n$ for  each $n\in\mathbb N$ such that the set $\{n\in\mathbb N\colon x_n\in W\}$ is  infinite  for  every  open neighborhood $W$ of $x$.
  \item \emph{sequentially feebly compact}~\cite[Def. 1.4]{DowPorterStephensonWoods2004} if for each sequence $\{U_n\colon n\in\mathbb{N}\}$ of non-empty open subsets of the
space $X$ there exist a point $x\in X$ and an infinite set $I\subset\mathbb{N}$ such that for each neighborhood $U$ of the point $x$ the set $\{n\in I\colon U_n\cap U=\varnothing\}$ is finite;
\footnote{One of the authors introduced this notion a few years ago as a natural property intermediate between feeble and sequential compactness, which may be useful in some applications in topological algebra. Indeed, for instance, Proposition 1.10. by Artico et al.~\cite{ArticoMarconiPelantRotterTkachenko} combined with Theorem 1.1 by Lipparini~\cite{Lipparini2016} states that that each $T_0$ feebly compact topological group is sequentially feebly compact. But later we found that it is a known property, even with the same name. The oldest reference which we know (see  \cite[p. 15]{Matveev1998}) is Reznichenko's paper~\cite{Reznichenko}. A similar notion had been given by Artico et al. in~\cite[Def. 1.8]{ArticoMarconiPelantRotterTkachenko}, where are used pairwise disjoint open sets instead. Lipparini proved in~\cite{Lipparini2016} that these notions are equivalent.}
  \item \emph{feebly compact} if each locally finite family of nonempty open subsets of the space $X$
  is finite.
  \item $k$-\emph{space} if $X$ is Hausdorff and a subset $F\subset X$ is closed in $X$
  if and only if $F\cap K$ is closed in $K$ for every compact subspace $K\subset X$.
\end{itemize}
According to Theorem~3.10.22 of \cite{Engelking1989}, a Tychonoff topological space $X$ is
feebly compact if and only if it is pseudocompact, that is,
each continuous real-valued function on $X$ is bounded.
Also, a Hausdorff topological space $X$ is feebly compact if and only if every locally
finite family of non-empty open subsets of $X$ is finite.

Relations between different classes of compact-like spaces are well-studied. Some of them
are presented at Diagram 3 in \cite[p.17]{Matveev1998}, at Diagram 1 in~\cite[p.~58]{Dorantes-AldamaShakhmatov} (for Tychonoff spaces),
and at Diagram~3.6 in~\cite[p. 611]{StephensonJr1984}.

\subsection{New classes} The notion of countable pracompactness has
been studied by several authors under several names. According to Matveev~\cite{Matveev1998}
it ``appeared in the literature under many different names''. Matveev
mentions that Baboolal, Backhouse and Ori \cite{Baboolal-Backhouse-Ori-1990} introduced an
equivalent notion under the name $e$-countable compactness.
In the recent paper \cite{Martinez-Cadena--Wilsoni-2017}  the authors study the notion using the expression ``densely countably compact''. A few references and a further name are recalled
there \cite{Arkhangelskii1996} 
According to Arkhangel'skii~\cite{Arkhangelskii1989}
countable compactenss at some subset and countable pracompactness
``find important applications in $C_p$-theory''.

In order to refine the stratification of countable pracompact spaces even more,
we introduce the following definitions. In each of them we require that
a space $X$ contains a dense subset $D$ with a special property. Namely,

\begin{itemize}
  \item if each sequence of points of the set $D$ has a convergent subsequence (in $X$)
then $X$ is \emph{sequentially pracompact};
  \item if each sequence of points of the set $D$ has a subsequence with compact closure (in
  $X$) then $X$ is \emph{totally countably pracompact};
  \item if each countable subset of the set $D$ has compact closure (in $X$)
  then $X$ is \emph{$\omega$-bounded-pracompact}.
\end{itemize}

Our main motivation to introduce the above spaces is possible applications
in topological algebra. In particular, we are going to use them in the paper \cite{Gutik-Sobol-2018}.

The following diagram shows relations between different classes of compact-like spaces.
All implications on the diagram are true and we \emph{suggest} that
they are well-known or easy to prove and
all non-marked arrows and not reversible without imposing additional
conditions on spaces. In particular,
in Section \ref{Sec:Examples} of the present paper we construct a
sequentially feebly compact space which is
not selectively feebly compact (Example~\ref{Hedgehog}),
a sequentially pracompact space which is not countably compact (Example~\ref{Example2}),
and a totally countably pracompact space which is nether $\omega$-bounded-pracompact
nor not totally countably compact (Example~\ref{Example3}).

\begin{equation*}
\xymatrix{
&*+[F]{\hbox{\textbf{compact}}}\ar[d] &\\
 &*+[F]{\hbox{\textbf{$\omega$-bounded}}}\ar[d]\ar[r] &
 *+[F]{\begin{array}{c}\hbox{\textbf{$\omega$-bounded-}}\\ \hbox{\textbf{pracompact}}\end{array}}\ar[dd]\ar@/^4.5pc/[dddd]\\
*+[F]{\begin{array}{c}
                                                          \hbox{\textbf{sequentially}}\\
                                                          \hbox{\textbf{compact}}
                                                        \end{array}}\ar@/^3pc/[r]|-{\footnotesize{T_2\hbox{-space}}}\ar@/_1pc/[dd]\ar@/^1pc/[rd] &*+[F]{\begin{array}{c}
                                                          \hbox{\textbf{totally}} \\
                                                          \hbox{\textbf{countably}}\\
                                                          \hbox{\textbf{compact}}
                                                        \end{array}
}\ar[d]\ar@/^2pc/[dr] &\\
 &*+[F]{\begin{array}{c}
                                                          \hbox{\textbf{countably}}\\
                                                          \hbox{\textbf{compact}}
                                                        \end{array}}\ar[d]\ar@/_2pc/[u]|-{\footnotesize{k\hbox{-space}}}
                                                        \ar@/^4pc/[lu]|-{\begin{array}{c}
                                                          \footnotesize{T_3\hbox{-space}}\\
                                                          \footnotesize{\hbox{+scattered}}
                                                        \end{array}} \ar@/^1pc/[lu]|-{\hbox{\footnotesize{sequential}}} &
 *+[F]{\begin{array}{c}
                                                          \hbox{\textbf{totally}}\\
                                                          \hbox{\textbf{countably}}\\
                                                          \hbox{\textbf{pracompact}}
                                                        \end{array}}\ar@/^2pc/[dl]\\
*+[F]{\begin{array}{c}
                                                          \hbox{\textbf{sequentially}}\\
                                                          \hbox{\textbf{pracompact}}
                                                        \end{array}}\ar[d]\ar[r]\ar[rru]|-{\footnotesize{\!T_2\hbox{-space}~}~}
 &*+[F]{\begin{array}{c}
                                                          \hbox{\textbf{countably}}\\
                                                          \hbox{\textbf{pracompact}}
                                                        \end{array}}\ar[d]\ar@/^2.2pc/[l]|-{\hbox{\footnotesize{sequential}}} &  \\
*+[F]{\begin{array}{c}
                                                          \hbox{\textbf{selectively sequentially}}\\
                                                          \hbox{\textbf{feebly compact}}
                                                        \end{array}}\ar[r]\ar[d]&
*+[F]{\begin{array}{c}                                   \hbox{\textbf{selectively}}\\
                                                          \hbox{\textbf{feebly compact}}\end{array}}\ar[d]  &
*+[F]{\begin{array}{c}                                   \hbox{\textbf{feebly}}\\
                                                          \hbox{\textbf{$\omega$-bounded}}\end{array}}\ar[l]  &\\
*+[F]{\begin{array}{c}
                                                          \hbox{\textbf{sequentially}}\\
                                                          \hbox{\textbf{feebly compact}}
                                                        \end{array}}\ar[r]& *+[F]{\hbox{\textbf{feebly compact}}}\ar@/_2.5pc/[l]|-{\hbox{\footnotesize{Fr\'{e}chet-Urysohn space}}} \ar@/_6.7pc/[uuu]|-{\footnotesize{T_4\hbox{-space}}} &   &\\
&&
}
\end{equation*}

\section{Basic properties}

\subsection{Extensions} We recall that an \emph{extension} of a space $X$ is a space
$Y$ containing $X$ as a dense subspace.
It is easy to check that countable pracompactness,
sequential pracompactness, feeble compactness, sequential feeble compactness,
selective feeble compactness, selective sequential feeble compactness, and
feeble $\omega$-boundedness is preserved by extensions.

\subsection{Continuous images}
 
It is easy to check that sequential compactness, feeble compactness,
sequential feeble compactness, countably pracompactness, and sequential pracompactness is preserved by continuous
images and total countable compactness, total countable pracompactness,
$\omega$-boundedness, and $\omega$-bounded-pracompactness is preserved by continuous Hausdorff images.

\subsection{Products}

The investigation of productivity of compact-like spaces
is motivated by the fundamental Tychonoff theorem,
stating that a product of a family of compact spaces is compact,
On the other hand, there are two countably compact
spaces whose product is not feebly compact (see~\cite{Engelking1989},
the paragraph before Theorem 3.10.16). The product of a countable family of sequentially
compact spaces is sequentially compact~\cite[Theorem~3.10.35]{Engelking1989}.
But already the Cantor cube $D^\frak c$ is not sequentially compact
(see~\cite{Engelking1989}, the paragraph after Example 3.10.38).
On the other hand some compact-like spaces are also preserved by products, see
~\cite[$\S$ 3-4]{VaughanHSTT} (especially Theorem 3.3, Proposition 3,4, Example 3.15, Theorem 4.7, and Example 4.15)
and $\S$7 for the history, and~\cite[$\S$ 5]{StephensonJr1984}.
Among more recent results we note that
Dow et al. in Theorem 4.1 of \cite{DowPorterStephensonWoods2004} proved that
a product of a family of sequentially feebly compact spaces is
again sequentially feebly compact, and in Theorem 4.3 that every product of feebly compact
spaces, all but one of which are sequentially feebly compact, is feebly compact.

In the next propositions we show that sequentially pracompact, $T_1$ totally countably pracompact, and $\omega$-bounded-pracompact spaces
are preserved by products.
The proofs are easy and straightforward but we provide them because a theorem should have a proof.

Let $X$ be a product of a family $\{X_\alpha\colon \alpha\in A\}$ of spaces.
For each
subset $B$ of the set $A$ as $\pi_B$ we denote the projection from
$X=\prod\{X_\alpha\colon \alpha\in A\}$ to $\prod\{X_\alpha\colon \alpha\in B\}$.
If $B=\{\alpha\}$ then $\pi_B$ we shall denote also as $\pi_\alpha$.
A space $Y\subset X$ is called a $\Sigma$-product of the family $\{X_\alpha\}$
provided there exists a point $y\in X$ such that
$Y=\{x\in X:x_\alpha=y_\alpha$ for all but countably many $\alpha\in A\}$.
In this case $Y$ is also called the \emph{Corson $\Sigma$-subspace of $X$ based at $y$}.

\begin{proposition}
The ($\Sigma$-) product of a family of sequentially pracompact spaces is
sequentially pracompact.
\end{proposition}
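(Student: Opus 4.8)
The plan is to exhibit, in each of the two cases, an explicit dense subset $D$ of the product and to check that every sequence taken from $D$ has a subsequence converging in the product. For every $\alpha\in A$ fix a dense set $D_\alpha\subseteq X_\alpha$ witnessing the sequential pracompactness of $X_\alpha$ and choose a point $d_\alpha\in D_\alpha$. The observation that makes everything go is that convergence in the product topology is exactly coordinatewise convergence, so the only genuine obstruction to extracting a convergent subsequence by a diagonal procedure is the size of the index set $A$; I will remove this obstruction by arranging that every sequence in $D$ varies in only countably many coordinates.

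For the full product $X=\prod_\alpha X_\alpha$ I would take $D$ to be the $\Sigma$-product of the sets $D_\alpha$ based at $d=(d_\alpha)$, that is $D=\{x\in\prod_\alpha D_\alpha : x_\alpha=d_\alpha \text{ for all but countably many } \alpha\}$. For the $\Sigma$-product $Y$ based at $y$ I would instead take $D=\{x\in Y : x_\alpha\in D_\alpha \text{ whenever } x_\alpha\neq y_\alpha\}$. In either case density is immediate: a nonempty basic open set of the product (intersected with $Y$ in the second case) constrains only finitely many coordinates, so on those one uses the density of the corresponding $D_\alpha$ to choose a coordinate in $D_\alpha$, while on all remaining coordinates one copies the base point ($d_\alpha$, respectively $y_\alpha$); the resulting point lies in $D$ and in the given basic set.

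Now let $\{x^{(n)}\}_{n\in\mathbb N}$ be a sequence in $D$ and let $S$ be the union over $n$ of the countable sets of coordinates on which $x^{(n)}$ differs from the base point. Then $S$ is countable and off $S$ the sequence is constant. For each $\alpha\in S$ the coordinate sequence $\{x^{(n)}_\alpha\}_n$ has a convergent subsequence in $X_\alpha$: in the full-product case it lies entirely in $D_\alpha$, so sequential pracompactness applies directly, and in the $\Sigma$-product case each term either equals $y_\alpha$ or lies in $D_\alpha$, so either a constant subsequence or the sequential pracompactness of $D_\alpha$ provides one. Enumerating $S$ and carrying out the standard diagonal extraction over these countably many coordinates produces a single subsequence $\{x^{(n_k)}\}_k$ that converges in every coordinate belonging to $S$ and is constant off $S$. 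Setting $a_\alpha$ to be the limit found for $\alpha\in S$ and $a_\alpha$ equal to the base-point value for $\alpha\notin S$ defines a point $a$; it lies in $X$ and, in the second case, in $Y$ as well, since $a$ agrees with $y$ off the countable set $S$. Because coordinatewise convergence is convergence in the product, $x^{(n_k)}\to a$, as required.

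The only real obstacle is the one already isolated: the naive candidate $\prod_\alpha D_\alpha$ fails for an uncountable index set, precisely because sequential compactness is not productive past countable products, and one cannot diagonalize over $A$. The whole point of the argument is that passing to a $\Sigma$-product-type dense set forces each sequence to have countable support, after which the classical countable diagonal argument finishes the job. I expect the verification of density and the small amount of bookkeeping about which coordinates lie in $D_\alpha$ to be the only mildly fiddly parts, and both are routine.
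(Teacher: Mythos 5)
Your proof is correct and follows essentially the same route as the paper's: replace the full product of the $D_\alpha$'s by a dense set of countable-support points, diagonalize over the countably many coordinates where a given sequence actually varies (nested infinite index sets plus a pseudo-intersection), and use the fact that convergence in the product topology is coordinatewise to land the limit in the ($\Sigma$-)product. The only cosmetic difference is that the paper arranges $y_\alpha\in D_\alpha$ and uses the single dense set $D=Y\cap\prod_\alpha D_\alpha$ to witness both cases at once, whereas you use two slightly different dense sets and handle terms equal to $y_\alpha$ by constant subsequences.
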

\begin{proof}
Let $X$ be the non-empty product of a family
$\{X_\alpha\colon \alpha\in A\}$ of sequentially pracompact spaces
and $Y\subset X$ be the Corson $\Sigma$-subspace of $X$ based at a point $y=(y_\alpha)\in X$.
For each index $\alpha\in A$ fix a dense subset
$D_\alpha\ni y_\alpha$ of the space $X_\alpha$ such that
each sequence of points of the set $D_\alpha$ has a convergent subsequence
and fix a point $a_\alpha\in D_\alpha$.
Put $D=Y\cap \prod_{\alpha\in A} D_\alpha $.
Then the set $D$ is a dense subset of the space $X$.
Let $C=\{x_n\colon n\in\mathbb{N}\}$ be a sequence of points of the
set $D$ and
$B=\{\alpha_m\colon m\in\mathbb{N}\}$ be an enumeration of the countable set
$\{\alpha\in A:\exists x\in C(x_\alpha\ne y_\alpha)\}$.
By induction we can build a sequence $\{x_{\alpha_m}\in X_{\alpha_m}\}$ of points and
a sequence $\{S_m\}$ of {\it infinite} subsets of $\mathbb{N}$ such that
$S_m\supset S_{m'}$ for each $m\le m'$ and
for each neighborhood $U_{\alpha_m}\subset X_{\alpha_m}$ of the point $x_{\alpha_m}$ the set
$\{n\in S_m\colon x_{n\alpha_m}\not\in U_{\alpha_m}\}$ is finite.
We can easily construct an infinite set $S\subset\mathbb{N}$ such that
the set $S\setminus S_m$ is finite for each $m\in\mathbb{N}$.
Choose a point $x=(x_\alpha)\in Y$ such that $x_\alpha$ is already
defined for $\alpha\in B$ and $x_\alpha=y_\alpha$ for $\alpha\in A\setminus B$.
Let $U$ be an arbitrary neighborhood of the point $x$.
There exist a finite subset $F$ of the set $A$
and a family $\{U_\alpha\colon\alpha\in F, U_\alpha\subset X_\alpha$ is an open neighborhood
of $x_\alpha\}$ such that
$x\in U'=\pi_F^{-1}\left(\prod\{U_\alpha\colon \alpha\in F\}\right)\subset U$. The
inductive construction implies that the set
$T_\alpha=\{n\in S\colon x_{n\alpha}\not\in U_{\alpha}\}$ is finite for each
$\alpha\in F$. Then $x_n\in U'\subset U$ for each
$n\in S\setminus\bigcup\{T_\alpha\colon \alpha\in F\}$.
\end{proof}

\begin{proposition}\label{Prod3}
The ($\Sigma$-) product of a family of totally countably pracompact $T_1$ spaces is
totally countably pracompact.
\end{proposition}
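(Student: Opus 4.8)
The plan is to mimic the proof of the previous proposition, replacing ``convergent subsequence'' by ``subsequence with compact closure'' and then gluing the coordinatewise compact sets together with Tychonoff's theorem. Let $X=\prod\{X_\alpha\colon\alpha\in A\}$ be the non-empty product and let $Y\subset X$ be the Corson $\Sigma$-subspace based at a point $y=(y_\alpha)$. For each $\alpha\in A$ I would fix a dense subset $D_\alpha\ni y_\alpha$ of $X_\alpha$ witnessing total countable pracompactness, that is, such that every sequence of points of $D_\alpha$ has a subsequence whose closure in $X_\alpha$ is compact, and put $D=Y\cap\prod_{\alpha\in A}D_\alpha$. Exactly as before, $D$ is dense in $X$ (hence in $Y$): given a basic open set determined on a finite set $F$ of coordinates, a point whose $\alpha$-th coordinate lies in $D_\alpha$ for $\alpha\in F$ and equals $y_\alpha$ for $\alpha\notin F$ belongs to $D$. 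Given a sequence $\{x_n\colon n\in\mathbb{N}\}$ of points of $D$, let $B=\{\alpha_m\colon m\in\mathbb{N}\}$ enumerate the countable set $\{\alpha\in A\colon\exists n\,(x_{n\alpha}\ne y_\alpha)\}$ of ``active'' coordinates.

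The heart is a diagonalisation over the countably many coordinates in $B$. Recursively choose infinite sets $\mathbb{N}=N_0\supseteq N_1\supseteq N_2\supseteq\cdots$ so that for each $m$ the closure $K_m=\overline{\{x_{n\alpha_m}\colon n\in N_m\}}^{\,X_{\alpha_m}}$ is compact, which is possible because $\{x_{n\alpha_m}\colon n\in N_{m-1}\}$ is a sequence of points of $D_{\alpha_m}$. Take a diagonal subsequence $\{x_{n_k}\}$ with $n_k\in N_k$ and $n_1<n_2<\cdots$; then $n_k\in N_m$ for every $k\ge m$, so the $\alpha_m$-coordinates of the tail lie in $K_m$ and the whole set $\{x_{n_k\alpha_m}\colon k\in\mathbb{N}\}$ is contained in $\tilde K_m:=K_m\cup\{x_{n_k\alpha_m}\colon k<m\}$. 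Here the $T_1$ hypothesis first enters: the adjoined finite set is closed, so $\tilde K_m$, being the union of the closed set $K_m$ with a finite set, is a closed and compact subset of $X_{\alpha_m}$.

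Finally I would assemble the witness. Put $\tilde K_\alpha=\{y_\alpha\}$ for $\alpha\in A\setminus B$ (closed and compact because $X_\alpha$ is $T_1$) and $P=\prod_{\alpha\in A}\tilde K_\alpha$, which is compact by Tychonoff's theorem. Since every point of $P$ equals $y_\alpha$ off the countable set $B$, we have $P\subseteq Y$, and by construction $\{x_{n_k}\}\subseteq P$. Writing $L=\overline{\{x_{n_k}\}}^{\,X}$, continuity of the projections gives $\pi_\alpha(L)\subseteq\overline{\{x_{n_k\alpha}\colon k\in\mathbb{N}\}}^{\,X_\alpha}\subseteq\tilde K_\alpha$, the last inclusion holding because $\tilde K_\alpha$ is closed. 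Hence $L\subseteq\prod_{\alpha\in A}\pi_\alpha(L)\subseteq P$, so $L$ is a closed subset of the compact space $P$ and is therefore compact; moreover $L\subseteq P\subseteq Y$, so $L$ is also the closure of the subsequence taken in $Y$. Thus the same set $D$ witnesses total countable pracompactness of both $X$ and $Y$.

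The one genuinely delicate step, and the place where $T_1$ is indispensable, is the very last inclusion argument. In a merely $T_1$ (possibly non-Hausdorff) product a compact set need not be closed, so from $\{x_{n_k}\}\subseteq P$ alone one cannot conclude that the closure $L$ stays inside $P$, and then $L$ could fail to be compact. The remedy is to control $L$ coordinatewise through the continuous projections and the \emph{closedness} of each $\tilde K_\alpha$ (which is exactly what $T_1$ buys us, via closed singletons $\{y_\alpha\}$ and closed exceptional finite sets), forcing $L\subseteq P$. I expect this to be the main obstacle to streamline; everything else is the routine diagonal and density bookkeeping inherited from the sequentially pracompact case.
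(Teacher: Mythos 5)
Your proof is correct and takes essentially the same route as the paper's: the same dense witness $D=Y\cap\prod_{\alpha\in A}D_\alpha$, the same recursive choice of nested infinite index sets over the countably many active coordinates followed by a diagonal (pseudo-intersection) subsequence, and the same conclusion that the closure lies in a Tychonoff-compact product of coordinatewise compact sets contained in $Y$. The only difference is one of detail: the paper ends with the bare assertion that $\{x_n\colon n\in S\}$ has compact closure in $X$ contained in $Y$, whereas you spell out the justification (closedness of the finite exceptional sets and of the singletons $\{y_\alpha\}$ via $T_1$, control of the closure through the projections), which is exactly where the $T_1$ hypothesis is consumed.
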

\begin{proof}
Let $X$ be the non-empty product of a family
$\{X_\alpha\colon \alpha\in A\}$ of totally countably pracompact spaces
and $Y\subset X$ be the Corson $\Sigma$-subspace of $X$ based at a point $y=(y_\alpha)\in X$.
For each index $\alpha\in A$ fix a dense subset
$D_\alpha\ni y_\alpha$ of the space $X_\alpha$ such that
each sequence of points of the set $D_\alpha$ has a
subsequence with compact closure in $X_\alpha$.
Put $D=Y\cap \prod_{\alpha\in A} D_\alpha $.
Then the set $D$ is a dense subset of the space $X$.
Let $C=\{x_n\colon n\in\mathbb{N}\}$ be a sequence of points of the
set $D$ and
$\{\alpha_m\colon m\in\mathbb{N}\}$ be an enumeration of the countable set
$\{\alpha\in A:\exists x\in C(x_\alpha\ne y_\alpha)\}$.
By induction we can build
a sequence $\{S_m\}$ of {\it infinite} subsets of $\mathbb{N}$ such that
$S_m\supset S_{m'}$ for each $m\le m'$ and the set $\{x_{n\alpha_m}:n\in S_m\}$
has compact closure in $X_{\alpha_m}$.
We can easily construct an infinite set $S\subset\mathbb{N}$ such that
the set $S\setminus S_m$ is finite for each $m\in\mathbb{N}$.
Then the set $\{x_{n}:n\in S\}$ has compact closure in $X$,
which is contained in $Y$.
\end{proof}

\begin{remark} The referee remarked that in Cartesian product case of
Proposition~\ref{Prod3} $T_1$ condition can be a weakened to
that for each $\alpha\in A$ a set $\{y_\alpha\}$ has compact closure in $X_\alpha$.
The proof remain almost the same, only the final words "which is contained in $Y$"
have to be dropped.

It motivates to define a class of spaces in
which every singleton (that is, one-point set) has compact closure.
The referee suggested to investigate which classes of compact-like space
belong to the class.
By definition, each $T_1$ space belong to the class.
Each totally countably compact space $X$ also belongs to the class because
for any point $x\in X$ a set $\overline{\{x\}}$ is closure of any subsequence of a
constant sequence $\{x_n\}$, where $x_n=x$ for each $n$.

On the other hand, the referee proposed to endow
$\omega$ with the topology of left intervals,
whose open sets are the intervals $[0, n)$, plus
the whole of $\omega$. Here closure of $0$ is the noncompact $\omega$.
We extend this construction as follows. Let $X=\omega_1+\omega$ endowed with
a topology with a subbase consisting of halfintervals $[0,\alpha)$, where
$\alpha<\omega_1+\omega$ and $(\alpha,\omega_1+\omega)$,  where $\alpha<\omega_1$.
Then closure of $\omega_1$ is a noncompact set $[\omega_1,\omega_1+\omega)$.
Now put $D=\omega_1$. Then $D$ is dense in $X$ and
each countable subset $C$ of $D$ is contained in a closed compact set $[0,\sup C]$ of
$D$. Thus $X$ is both sequentially and $\omega$-bounded-pracompact.

A sequentially compact example of a space not belonging to the class is more complicated,
but, luckily, already known. Namely, in~\cite[Example 5]{RavskyPPG} the second author built
a group $G=\bigoplus\limits_{\alpha\in\omega_1} \mathbb Z$, which is the direct sum of
the groups $\mathbb Z$
and its subgroup $$S=\{0\}\cup\{(x_\alpha)\in G:(\exists \beta\in\omega_1)( (\forall \alpha>\beta)
(x_\alpha=0)\&(x_\beta>0))\}.$$
Let $G_S$ be the group $G$ endowed with a topology with a base $\{x+S:x\in G\}$. Then $G_S$
is a paratopological group, that is the group operation $+:G\times G\to G$ is
continuous. In~\cite[Example 5]{RavskyPPG} it is shown that the group $G_S$ is
sequentially compact. On the other hand, by~\cite[Lemma 17]{RavskyPPG} the set
$S\subset G_S$ is compact. Since $\overline{\{0\}}=\{x\in G:x+S\ni 0\}=-S$,
if the set $-S$ is compact then $G=S\cup (-S)$ is compact too, which contradicts
~\cite[Proposition 12]{RavskyPPG}.
\end{remark}

\begin{proposition}\label{Prod4}
The product of a family of $\omega$-bounded-pracompact spaces is $\omega$-bounded-pracompact.
Moreover, if all spaces of the family are $T_1$ then a $\Sigma$-product of the family is
$\omega$-bounded-pracompact too.
\end{proposition}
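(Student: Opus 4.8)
The plan is to follow the template established in the two preceding propositions: exhibit a dense subset $D$ of the (sub)product all of whose countable subsets are trapped inside a compact ``box'' $\prod_\alpha K_\alpha$, and then invoke Tychonoff's theorem. For the full Cartesian product $X=\prod_{\alpha\in A}X_\alpha$, I would for each $\alpha$ fix a dense subset $D_\alpha\subseteq X_\alpha$ witnessing $\omega$-bounded-pracompactness and set $D=\prod_{\alpha\in A}D_\alpha$, which is dense in $X$ since a product of dense subsets is dense. Given a countable set $C=\{x_n\colon n\in\mathbb N\}\subseteq D$, the guiding observation is that for each coordinate $\alpha$ the projection $\pi_\alpha(C)$ is a countable subset of $D_\alpha$, so that $K_\alpha:=\overline{\pi_\alpha(C)}$ is compact in $X_\alpha$. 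I then put $P=\prod_{\alpha\in A}K_\alpha$, which is compact by Tychonoff's theorem and contains $C$.

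The crucial and only nonroutine step is to verify that $\overline{C}\subseteq P$, equivalently that $P$ is closed in $X$. This is where the full-product case goes through with no separation axiom: each $K_\alpha$, being a closure, is automatically closed in $X_\alpha$, so each $\pi_\alpha^{-1}(K_\alpha)$ is closed in $X$ and $P=\bigcap_{\alpha\in A}\pi_\alpha^{-1}(K_\alpha)$ is closed. Hence $\overline{C}$ is a closed subset of the compact set $P$ and is therefore compact, proving the first assertion.

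For the $\Sigma$-product $Y\subseteq X$ based at a point $y=(y_\alpha)$, I would additionally require $y_\alpha\in D_\alpha$, put $D=Y\cap\prod_{\alpha\in A}D_\alpha$, and check as in the sequentially pracompact proposition that $D$ is dense in $X$, hence in $Y$. For a countable $C=\{x_n\colon n\in\mathbb N\}\subseteq D$, let $B=\{\alpha\in A\colon\exists x\in C\,(x_\alpha\ne y_\alpha)\}$, which is countable because each $x_n$ differs from $y$ in only countably many coordinates. I define $K_\alpha=\overline{\pi_\alpha(C)}$ for $\alpha\in B$ and $K_\alpha=\{y_\alpha\}$ for $\alpha\notin B$; then $P=\prod_{\alpha\in A}K_\alpha$ is compact, contains $C$, and is contained in $Y$ because its points differ from $y$ only along the countable set $B$.

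The main obstacle is, once more, showing that $P$ is closed in $X$, and this is precisely the point that forces the $T_1$ hypothesis: for $\alpha\in B$ the factor $K_\alpha$ is closed as a closure, but for $\alpha\notin B$ the factor is the singleton $\{y_\alpha\}$, which is guaranteed to be closed exactly when $X_\alpha$ is $T_1$. With all factors closed, $P=\bigcap_{\alpha\in A}\pi_\alpha^{-1}(K_\alpha)$ is closed in $X$, and since $P\subseteq Y$ it is closed in $Y$ as well. Therefore the closure of $C$ taken in $Y$ is a closed subset of the compact set $P$, hence compact, establishing the ``moreover'' clause.
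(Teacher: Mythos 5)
Your proof is correct and takes essentially the same route as the paper's: both arguments trap a countable $C\subseteq D$ inside the compact, closed box $\prod_{\alpha\in A}\overline{\pi_\alpha(C)}$ given by Tychonoff's theorem, and both invoke the $T_1$ hypothesis precisely so that the off-$B$ factors reduce to the closed singletons $\{y_\alpha\}$, keeping the box closed and inside the $\Sigma$-product. The only cosmetic difference is that the paper uses the single dense set $Y\cap\prod_{\alpha\in A}D_\alpha$ for both assertions, whereas you use $\prod_{\alpha\in A}D_\alpha$ for the full product; this changes nothing.
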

\begin{proof}
Let $X$ be the non-empty product of a family
$\{X_\alpha\colon \alpha\in A\}$ of $\omega$-bounded-pracompact spaces
and $Y\subset X$ be the Corson $\Sigma$-subspace of $X$ based at a point $y=(y_\alpha)\in X$.
For each index $\alpha\in A$ fix a dense subset
$D_\alpha\ni y_\alpha$ of the space $X_\alpha$ such that
each countable subset of the set $D_\alpha$ has compact closure in $X_\alpha$.
Put $D=Y\cap \prod_{\alpha\in A} D_\alpha $.
Then the set $D$ is a dense subset of the space $X$.
Let $C$ be a countable subset of the set $D$.
Then $C$ is a subset of a closed compact subset
$C'=\prod_{\alpha\in A}\overline{\pi_\alpha(C)}$ of the space $X$.
Now assume that all spaces $X_\alpha$ are $T_1$.
Put $B=\{\alpha\in A:\exists x\in C(x_\alpha\ne y_\alpha)\}$. The set $B$ is
countable and so $C'=\prod_{\alpha\in B} \overline{\pi_\alpha(C)}
\times \prod_{\alpha\in A\setminus B} \{y_\alpha\}\subset Y$.
\end{proof}

\begin{example} This example shows that $T_1$ condition is essential in $\Sigma$-product case of
Propositions~\ref{Prod3} and ~\ref{Prod4}.
Let $X'$ be a space consisting of two distinct points $a$ and $b$
endowed with a topology consisting of sets $\varnothing$, $\{a\}$, and $X'$.
Let $A$ be an uncountable subset, $X$ be the
product of a family $\{X_\alpha\colon \alpha\in A\}$,
$Y\subset X$ be the Corson $\Sigma$-subspace of $X$ based at a point $y=(a_\alpha)\in X$,
where $X_\alpha=X'$ and $a_\alpha=a$ for each $\alpha\in A$.
Since the space $X'$ is compact, it is easy to check that the space $Y$ is countably compact.
On the other hand, the space $Y$ is not totally countably pracompact.
For this purpose it suffices to show that for
any point $x=(x_\alpha)\in Y$ a set $\overline{\{x\}}$ (everywhere in this
example we by $\overline{S}$ we mean closure in $Y$ of its subset $S$)
is not compact , because $\overline{\{x\}}$
is closure (in $Y$) of any subsequence of a constant sequence $\{x_n\}$,
where $x_n=x$ for each $n$. By~\cite[Proposition 2.3.3]{Engelking1989},
$\overline{\{x\}}=\overline{\{(x_\alpha)\}}=Y\cap \prod_{\alpha\in A}\overline{\{x_\alpha\}}$.
Remark that $b\in \overline{\{x_\alpha\}}$ for each $\alpha\in A$.
Now for each $\alpha\in A$ put $Y_\alpha=\{y=(y_\beta)\in Y:y_\alpha=a\}$. Since
for each point $z=(z_\alpha)\in Y$ there exists an index $\alpha$ such that $z_\alpha=a$,
the family $\{Y_\alpha:\alpha\in A\}$ is an open cover of the set $Y$, and hence of $\overline{\{x\}}$.
Let $C$ be any finite subset of $A$. Let $t=(t_\alpha)\in Y$ be such that
$t_\alpha=b$ if $x_\alpha=b$ or $\alpha\in C$ and $t_\alpha=a$, otherwise.
Then $t\in \overline{\{x\}}\setminus\bigcup \{Y_\alpha:\alpha\in C\}$.
Thus the set $\overline{\{x\}}$ is not compact.
\end{example}

Since sequential feebly compactness is preserved by extensions, the next proposition
strengthen a bit Theorem 4.1 of \cite{DowPorterStephensonWoods2004}.

\begin{proposition} The $\Sigma$-product of a family of sequentially feebly compact spaces is
sequentially feebly compact.
\end{proposition}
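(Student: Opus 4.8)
The plan is to mimic the diagonal argument of the preceding propositions, working coordinatewise over the countably many coordinates that the given open sets actually constrain, and invoking the sequential feeble compactness of each factor.

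First I would set up notation. Let $Y\subseteq X=\prod\{X_\alpha\colon\alpha\in A\}$ be the Corson $\Sigma$-subspace based at $y=(y_\alpha)$, and let $\{V_n\colon n\in\mathbb{N}\}$ be a sequence of non-empty open subsets of $Y$. For each $n$ choose a basic open set $W_n=Y\cap\pi_{F_n}^{-1}(\prod_{\alpha\in F_n}W_{n,\alpha})\subseteq V_n$, where $F_n\subseteq A$ is finite and each $W_{n,\alpha}$ is a non-empty open subset of $X_\alpha$; such a $W_n$ is non-empty because $F_n$ is finite and the remaining coordinates may be set to $y_\alpha$. Put $B=\bigcup_n F_n$ and fix an enumeration $B=\{\alpha_m\colon m\in\mathbb{N}\}$; note that $B$ is countable and $F_n\subseteq B$ for every $n$.

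Next, the core of the argument is a diagonal construction producing a decreasing sequence $\mathbb{N}=S_0\supseteq S_1\supseteq\cdots$ of infinite subsets of $\mathbb{N}$ together with points $x_{\alpha_m}\in X_{\alpha_m}$. At stage $m$, consider $N_m=\{n\in S_{m-1}\colon\alpha_m\in F_n\}$. If $N_m$ is infinite, I would apply the sequential feeble compactness of $X_{\alpha_m}$ to the sequence $\{W_{n,\alpha_m}\colon n\in N_m\}$ to obtain a point $x_{\alpha_m}$ and an infinite $S_m\subseteq N_m$ such that $\{n\in S_m\colon W_{n,\alpha_m}\cap U=\varnothing\}$ is finite for every neighborhood $U$ of $x_{\alpha_m}$. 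If $N_m$ is finite, set $x_{\alpha_m}=y_{\alpha_m}$ and $S_m=S_{m-1}\setminus N_m$, so that no $n\in S_m$ constrains the coordinate $\alpha_m$. In either case I obtain the inductive property: for each neighborhood $U$ of $x_{\alpha_m}$, the set $\{n\in S_m\colon\alpha_m\in F_n,\ W_{n,\alpha_m}\cap U=\varnothing\}$ is finite. After the induction, choose an infinite pseudo-intersection $S\subseteq\mathbb{N}$ with $S\setminus S_m$ finite for every $m$, and define $x=(x_\alpha)\in Y$ by taking the already-defined $x_{\alpha_m}$ on $B$ and $x_\alpha=y_\alpha$ off $B$; since $x$ differs from $y$ only on the countable set $B$, indeed $x\in Y$. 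I claim that the point $x$ and the infinite set $I=S$ witness sequential feeble compactness.

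Finally I would verify the claim. Let $V$ be a neighborhood of $x$ in $Y$ and pick a basic neighborhood $V'=Y\cap\pi_F^{-1}(\prod_{\alpha\in F}V_\alpha)\subseteq V$ with $F\subseteq A$ finite and $V_\alpha$ an open neighborhood of $x_\alpha$. Since $W_n\subseteq V_n$ and $V'\subseteq V$, it suffices to prove that $\{n\in S\colon W_n\cap V'=\varnothing\}$ is finite. The key elementary observation is that, because only finitely many coordinates are constrained and the rest can be filled in with $y_\alpha$ inside $Y$, one has $W_n\cap V'\neq\varnothing$ if and only if $W_{n,\alpha}\cap V_\alpha\neq\varnothing$ for every $\alpha\in F_n\cap F$. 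Consequently $\{n\in S\colon W_n\cap V'=\varnothing\}\subseteq\bigcup_{\alpha\in F}\{n\in S\colon\alpha\in F_n,\ W_{n,\alpha}\cap V_\alpha=\varnothing\}$. For $\alpha\in F\setminus B$ the corresponding set is empty because $F_n\subseteq B$; for $\alpha=\alpha_m\in F\cap B$, the set $\{n\in S_m\colon\alpha_m\in F_n,\ W_{n,\alpha_m}\cap V_{\alpha_m}=\varnothing\}$ is finite by the inductive property (with $U=V_{\alpha_m}$, a neighborhood of $x_{\alpha_m}$), and since $S\setminus S_m$ is finite the same holds with $S_m$ replaced by $S$. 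As $F$ is finite, the union is finite, which proves the claim. The main obstacle I anticipate is purely in the bookkeeping of the diagonal construction: one must correctly handle those coordinates $\alpha_m$ constrained by only finitely many of the $W_n$ (where sequential feeble compactness cannot be invoked and the base point is used instead), and one must reduce emptiness of $W_n\cap V'$ in the $\Sigma$-product to the single-coordinate condition above; once this reduction is in place, finiteness of $F$ and the pseudo-intersection property of $S$ finish the argument. I would also note that, since $Y$ is dense in the full product $X$ and sequential feeble compactness is preserved by extensions, this statement strengthens Theorem 4.1 of \cite{DowPorterStephensonWoods2004}.
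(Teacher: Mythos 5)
Your proof is correct, but it follows a genuinely different route from the paper's. The paper performs the same two outer steps as you do --- restricting attention to the countable set $B$ of coordinates actually constrained by the chosen basic open sets, and lifting the witness point back into the $\Sigma$-product by filling the remaining coordinates with $y_\alpha$ --- but for the core step it does not diagonalize at all: it applies Theorem 4.1 of \cite{DowPorterStephensonWoods2004} (an arbitrary product of sequentially feebly compact spaces is sequentially feebly compact) to the subproduct $\prod\{X_\alpha\colon\alpha\in B\}$ and the sequence $\{\pi_B(U_n)\}$, obtaining the point $x'$ and the infinite set $I$ in one stroke. You instead open up that black box and reprove the needed (countable-product) case inline, via the decreasing chain $S_0\supseteq S_1\supseteq\cdots$ of infinite index sets and a pseudo-intersection $S$, in the style of the paper's earlier propositions on sequentially pracompact and totally countably pracompact products; the extra bookkeeping you flag --- coordinates $\alpha_m$ constrained by only finitely many $W_n$, handled by falling back on $y_{\alpha_m}$ and discarding $N_m$ from the index set --- is exactly the point where a naive diagonalization would stall, and you handle it correctly, as you do the reduction of $W_n\cap V'=\varnothing$ to the single-coordinate conditions (valid because the finitely many constrained coordinates can be completed by $y_\alpha$ inside $Y$). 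What the paper's choice buys is brevity. What yours buys is self-containment, and in fact slightly more: since the $\Sigma$-product is dense in the full product and sequential feeble compactness is preserved by extensions, your argument gives an independent proof of Theorem 4.1 of \cite{DowPorterStephensonWoods2004} itself, whereas the paper's proposition, though presented as strengthening that theorem, uses it as an ingredient.
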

\begin{proof}
Let $X$ be a non-empty product of a family $\{X_\alpha\colon \alpha\in A\}$ of sequentially
feebly compact spaces,
$Y\subset X$ be the Corson $\Sigma$-subspace of $X$ based at a point $y=(y_\alpha)\in X$,
and $\{V_n\colon n\in\mathbb{N}\}$ be a sequence of non-empty open subsets of the space $Y$.
For each index $n$ choose a finite subset $B_n$ of the set $A$ and a family
$\{U_{n\alpha}\colon \alpha\in B_n, U_{n\alpha}$ is a non-empty open subset of $X_\alpha\}$
such that $U_n\cap Y \subset V_n$, where $U_n=\pi_{B_n}^{-1}\left(\prod\{U_{n\alpha}\colon \alpha\in
B_n\}\right)$. Put $B=\bigcup B_n$. By Theorem 4.1 of \cite{DowPorterStephensonWoods2004},
the space $X'=\{X_\alpha\colon \alpha\in B\}$ is sequentially feebly compact. Since
$\{\pi_B(U_n)\}$ is a sequence of its non-empty open subsets,
there exist a point $x'\in X'$ and an infinite set $I\subset\mathbb{N}$
such that for each neighborhood $U'$ of the point $x'=(x'_\alpha)_{\alpha\in B}$ the set
$\{n\in I\colon \pi_B(U_n)\cap U'=\varnothing\}$ is finite.
Define a point $x=(x_\alpha)_{\alpha\in A}\in Y$ by putting $x_\alpha=x'_\alpha$ for each
$\alpha\in B$ and $x_\alpha=y_\alpha$ for each $\alpha\in A\setminus B$.
Let $V$ be an arbitrary neighborhood of the point $x$ in the space $Y$.
Pick a canonical neighborhood $U$ of the point $x$ in the space $X$ such that $U\cap Y\subset V$.
Then there exists a subset $I'$ of the set $I$ such that a set $I\setminus I'$ is finite
and $\pi_B(U_n)\cap \pi_B(U)\ne\varnothing$ for each $n\in I'$. Fix any such $n$ and
pick a point $z'=(z'_\alpha)_{\alpha\in B}\in \pi_B(U_n)\cap \pi_B(U)$.
Define a point $z=(z_\alpha)_{\alpha\in A}\in Y$ by putting $z_\alpha=z'_\alpha$ for each
$\alpha\in B$ and $z_\alpha=y_\alpha$ for each $\alpha\in A\setminus B$.
It is easy to check that $z\in U_n\cap U\cap Y\subset V_n\cap V$.
\end{proof}

\section{Backward implications}

Banakh and Zdomskyy in~\cite{BanakhZdomskyy2004} defined a topological space $X$ to be an
\emph{$\alpha_7$-space}
if for any family $\{S_n:n\in\mathbb{N}\}$ of countable infinite subsets of the space $X$
such that a set $S_n\setminus U$ is finite for any $n$ and any neighborhood
$U$ of $x$ there exist a countable infinite subset $S$ of the space $X$ and
a point $y\in X$ such that a set $S\setminus V$ is finite for any neighborhood
$V$ of $y$ and $S_n\cap S\ne\varnothing$ for infinitely many $n$.

\begin{proposition}
Let $X$ be a Fr\'echet-Urysohn feebly compact space. Then $X$ is sequentially feebly compact.
Moreover, if $X$ is quasiregular or $\alpha_7$ then $X$ is
selectively sequentially feebly compact.
\end{proposition}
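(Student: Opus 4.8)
The plan is to combine the standard ``cluster point'' characterization of feeble compactness with the Fréchet--Urysohn property, and then to treat the selective refinement separately. First I would fix a sequence $\{U_n\}_{n\in\mathbb N}$ of nonempty open subsets of $X$ and record that feeble compactness supplies a \emph{cluster point}: a point $x\in X$ such that every neighborhood of $x$ meets $U_n$ for infinitely many $n$. Indeed, were this to fail, every point of $X$ would have a neighborhood meeting only finitely many of the $U_n$; if infinitely many of the sets $U_n$ are distinct this exhibits an infinite locally finite family of nonempty open sets, while if only finitely many are distinct then some value $U$ is repeated infinitely often and any point of $U$ is a cluster point --- either way contradicting feeble compactness. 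Having fixed such an $x$, I would set $J=\{n\in\mathbb N\colon x\in\overline{U_n}\}$ and split the argument according to whether $J$ is infinite.

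If $J$ is infinite, then taking $I=J$ we have, for every neighborhood $U$ of $x$ and \emph{every} $n\in I$, that $U\cap U_n\ne\varnothing$, so $\{n\in I\colon U_n\cap U=\varnothing\}=\varnothing$; this already yields sequential feeble compactness. If $J$ is finite, pick $k>\max J$; since $x$ is a cluster point, $x\in\overline{\bigcup_{n\ge k}U_n}$, so the Fréchet--Urysohn property supplies a sequence $(q_i)$ with $q_i\in U_{m_i}$, $m_i\ge k$, and $q_i\to x$. The indices $m_i$ must be unbounded, for otherwise $x\in\overline{U_k\cup\dots\cup U_M}=\overline{U_k}\cup\dots\cup\overline{U_M}$ would force $x\in\overline{U_j}$ for some $j\ge k>\max J$, contradicting the choice of $J$. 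Extracting a subsequence with strictly increasing indices yields points $p_l\in U_{n_l}$ with $n_l$ increasing and $p_l\to x$, which again gives sequential feeble compactness (and, choosing $p_l$ at the indices $n_l$ and arbitrary points elsewhere, even a selection with a convergent subsequence). This settles the first assertion.

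For the selective assertion the only remaining difficulty is the case where $J$ is infinite: there I have $x\in\overline{U_j}$ for infinitely many $j$, but without first countability I cannot directly select one point from each $U_j$ so that the chosen points converge. I expect this diagonalization to be the main obstacle, and each of the two extra hypotheses removes it in a different way. Under quasiregularity I would run the entire analysis not on $\{U_n\}$ but on sets $W_n$ chosen with $\varnothing\ne W_n$ open and $\overline{W_n}\subseteq U_n$; then in the infinite-$J$ case $x\in\overline{W_j}\subseteq U_j$, so $x$ itself is a legitimate choice in $U_j$ and the constant sequence serves as the required convergent selection. Under the $\alpha_7$ property I would instead use the Fréchet--Urysohn property to choose, for each $j\in J$ with $x\notin U_j$, an infinite set $S_j\subseteq U_j$ converging to $x$ (indices $j$ with $x\in U_j$ being handled by the constant choice $x$ as above); since all these $S_j$ converge to the common point $x$, the $\alpha_7$ property produces an infinite set $S$ converging to some $y$ and meeting infinitely many $S_j$.

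It then remains to harvest the selection from the last construction. Selecting a point $p_j\in S\cap S_j\subseteq U_j$ for each $j$ in the infinite index set given by $\alpha_7$, I obtain points lying in $S$; because all but finitely many points of $S$ lie in any prescribed neighborhood of $y$, the sequence $(p_j)$ either repeats some value infinitely often (a constant, hence convergent, subsequence) or converges to $y$, and in both cases possesses a convergent subsequence. Filling in the remaining indices $n$ with arbitrary points of the corresponding nonempty $U_n$ completes a choice of points $x_n\in U_n$ whose sequence has a convergent subsequence, establishing selective sequential feeble compactness. The verifications that the $S_j$ may be taken infinite and that the finite-union-of-closures identity applies are routine, so I would only remark on them in passing.
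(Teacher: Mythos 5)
Your overall strategy coincides with the paper's: extract a cluster point $x$ from feeble compactness, split on whether $J=\{n\colon x\in\overline{U_n}\}$ is infinite, handle the finite case by a Fr\'echet--Urysohn extraction with unbounded indices, shrink to open sets $W_n$ with $\overline{W_n}\subseteq U_n$ in the quasiregular case, and apply the $\alpha_7$ property to a family of sets converging to $x$ in the remaining case. Almost all of this is correct, and the harvesting of the selection from $\alpha_7$ (constant subsequence versus convergence to $y$) matches the paper. But one step you explicitly dismiss as routine is a genuine gap.

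In the $\alpha_7$ branch you claim that for each $j\in J$ with $x\notin U_j$ the Fr\'echet--Urysohn property supplies an \emph{infinite} set $S_j\subseteq U_j$ converging to $x$. The proposition carries no separation axioms, and without $T_1$ this claim is false: Fr\'echet--Urysohn only gives a \emph{sequence} in $U_j$ converging to $x$, and that sequence may have finite range. Concretely, $U_j$ may contain a point $y\ne x$ with $x\in\overline{\{y\}}$, so that the constant sequence $y,y,\dots$ converges to $x$, and $U_j$ may even be a single point, so no sequence in $U_j$ with infinite range exists at all (already the Sierpi\'nski space $\{x,y\}$ with $\{y\}$ open, $U_n=\{y\}$ for all $n$ and cluster point $x$ defeats your construction, and it satisfies all hypotheses of the proposition, $\alpha_7$ vacuously). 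Since the $\alpha_7$ property as defined in the paper applies only to families of countable \emph{infinite} sets, your application of it is not licensed, and your dichotomy ``$x\in U_j$ versus $x\notin U_j$'' does not cover this case. The paper's proof is built around exactly this point: it replaces your dichotomy by ``the convergent sequence can be taken with pairwise distinct terms versus it must be constant,'' collects the constant ones into an index set $I_0'$, observes that if $I_0'$ is infinite then the constant values themselves form a selection converging to $x$ (every neighborhood of $x$ contains each such value), and applies $\alpha_7$ only when $I_0'$ is finite, to the remaining genuinely infinite sets. Your argument can be repaired the same way --- for each bad $j$ take the value $y_j\in U_j$ of a constant convergent sequence and split on whether there are infinitely many bad $j$ --- but as written the step fails for non-$T_1$ spaces.
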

\begin{proof}
Let $X$ be a Fr\'echet-Urysohn feebly compact space and
$\{V_n\colon n\in\mathbb{N}\}$ be a sequence of non-empty open subsets of the space $X$.
For each $n$ choose a non-empty open set $U_n\subset V_n$ such that $\overline{U_n}\subset V_n$
provided the space $X$ is quasiregular.
Since the space $X$ is feebly compact, there exists a point $x\in X$ such that each neighborhood
of the point $x$ intersects infinitely many sets of the sequence $\{U_n\}$.
Put $I_0=\left\{n\in\mathbb{N}\colon x\in\overline{U_n}\right\}$.

Suppose the set $I_0$ is infinite. Then $U\cap U_n\ne\varnothing$ for each $n\in I_0$
and each neighborhood $U$ of the point
$x$. If the space $X$ is quasiregular then $x\in V_n$ for each $n\in I_0$, so a
constant sequence $\{x_n=x:n\in I_0\}$ converges to $x$. Assume that $X$ is an $\alpha_7$-space.
Since the space $X$ is Fr\'echet-Urysohn, for each $n\in I_0$ there exists
a sequence $S'_n=\{x^n_k:k\in\mathbb{N}\}$ of points of $U_n$ convergent to a point $x$. Considering its subsequence,
if necessarily, we can assume that the sequence $S'_n$ consists of distinct points
or it is constant. In the latter case we have $x^n_k=x^n\in U_n$ for each $k$
for some point $x^n\in U_n$ such that $x\in \overline{\{x^n\}}$. Put
$I'_0=\{n\in I_0:S'_n$ is constant$\}$.
If the set  $I'_0$ is infinite then a sequence $\{x^n:n\in I'_0\}$ converges to
the point $x$. So we suppose the set $I'_0$ is finite.
Since $X$ is an $\alpha_7$-space,
there exist a countable infinite subset $S$ of the space $X$ and
a point $y\in X$ such that a set $S\setminus V$ is finite for any neighborhood
$V$ of $y$ and a set
$I''_0=\{n\in I_0\setminus I'_0:$ there exists a natural $k(n)$ such that $x^n_{k(n)}\in S\}$
is infinite. For each $n\in I''_0$ put $x_n=x^n_{k(n)}\in U_n$. If
there exists a point $z\in X$ such that a set $I_1=\{n\in I''_0:x_n=z\}$ is infinite then
a sequence $\{x_n:n\in I_1\}$ converges to the point $z$. Otherwise
a sequence $\{x_n:n\in I''_0\}$ converges to
the point $y$. Indeed, let $V$ be an arbitrary
neighborhood of the point $y$. Then a set $S\setminus V$ is finite
and $x_n\in V$ for each $n\in I''_0\setminus\{n: x_n\in S\setminus V\}$.

Suppose the set $I_0$ is finite. Since
$x\in\overline{\bigcup\left\{U_n\colon n\in\mathbb{N}\setminus I_0\right\}}$ and $X$ is a
Fr\'echet-Urysohn space, there exists a sequence $\{x'_m\colon m\in\mathbb{N}\}$ of points of
the set $\bigcup\{U_n\colon n\in\mathbb{N}\setminus I_0\}$ converging to the point $x$.
For each index $m\in\mathbb{N}$ choose an index $n(m)\in\mathbb{N}\setminus I_0$ such that
$x'_m\in U_{n(m)}$. Put $I_1=\{n(m)\colon m\in\mathbb{N}\}$. Since $x\not\in\overline{U_n}$
for each $n\in\mathbb{N}\setminus I_0$, the set $I_1$ is infinite.For each $r\in I_1$ pick a point
$x_r=x'_{m(r)}$, where $n(m(r))=r$. Then $x_r\in U_r$ and a sequence $\{x_r:r\in I_1\}$ converges to
the point $x$. Indeed, let $U$ be an arbitrary
neighborhood of the point $x$. Since the sequence $\{x'_m\}$ converges to the point $x$,
there exists $N\in\mathbb{N}$ such that $x'_m\in U$ for each $m>N$. Then
$x_r\in U$ for each $r\in I_1\setminus\left\{n(m)\colon 0\le m\le N\right\}$.
\end{proof}

\begin{proposition}
Each sequential countably pracompact space is sequentially pracompact.
\end{proposition}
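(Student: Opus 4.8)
The plan is to show that the very dense set $D$ witnessing countable pracompactness also witnesses sequential pracompactness. So I fix a dense subset $D\subseteq X$ such that every infinite subset of $D$ has an accumulation point in $X$, take an arbitrary sequence $\{x_n\}_{n\in\mathbb{N}}$ of points of $D$, and look for a convergent subsequence. First I would dispose of the trivial case: if some value is attained infinitely often, the corresponding constant subsequence converges and we are done. Hence I may assume, passing to a subsequence, that the points $x_n$ are pairwise distinct, so that the range $B=\{x_n\colon n\in\mathbb{N}\}$ is an \emph{infinite} subset of $D$.

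The heart of the argument is to derive a contradiction from the assumption that $\{x_n\}$ has no convergent subsequence, and this is where sequentiality enters. Under that assumption I would prove that every subset $S\subseteq B$ is closed. By sequentiality it suffices to check that $S$ is sequentially closed, so let a sequence of points of $S$ converge to a point $y$. If this sequence took infinitely many values, it would contain a subsequence of $\{x_n\}$ with pairwise distinct terms converging to $y$, that is, a convergent subsequence of the original sequence, contradicting our assumption. Therefore it takes only finitely many values and thus has a constant subsequence, equal to some $x_m\in S$, which forces $y\in\overline{\{x_m\}}$; when the points of $X$ are closed this gives $y=x_m\in S$, as required. Applying this with $S=B$ shows that $B$ is closed, and applying it with $S=B\setminus\{x_m\}$ for each $m$ shows that each $x_m$ is isolated in $B$; hence $B$ is an infinite closed discrete subspace of $X$.

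Finally I would invoke countable pracompactness: the infinite subset $B$ of $D$ must possess an accumulation point in $X$. But a closed discrete set admits no accumulation point, since each of its points has a neighbourhood meeting $B$ only in that point, and no point of $X$ lies in $\overline{B}\setminus B=\varnothing$. This contradiction shows that $\{x_n\}$ has a convergent subsequence after all, so $D$ witnesses sequential pracompactness and $X$ is sequentially pracompact.

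I expect the main obstacle to be precisely the passage from an \emph{accumulation point of the set} $B$ to an \emph{actual convergent subsequence}, which is exactly what sequentiality is there to supply; this is the analogue, localized to the dense set $D$, of the classical fact that a sequential countably compact space is sequentially compact. The only genuinely delicate point is the sequential-closedness step above: the constant-subsequence case yields $y\in\overline{\{x_m\}}$, and concluding $y\in S$ is immediate in $T_1$ spaces (indeed in any space in which singletons are closed) but must be treated with more care in the absence of separation axioms, since a constant sequence may converge to points other than its value.
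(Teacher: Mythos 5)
Your proof follows essentially the same route as the paper's, just packaged contrapositively: the paper takes an accumulation point $y$ of the range $B$, notes that $B\setminus\{y\}$ is then not closed, uses sequentiality to produce a sequence of points of $B\setminus\{y\}$ converging to a point outside it, and splits into the same finitely-many-values versus infinitely-many-values dichotomy that you use; you instead suppose there is no convergent subsequence, deduce that every subset of $B$ is closed, and contradict the existence of an accumulation point. In both write-ups the final re-enumeration of infinitely many distinct values into a genuine (increasing-index) subsequence of $\{x_n\}$ is left implicit, and both arguments are complete for $T_1$ spaces.

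The caveat you flag is not a defect of your argument relative to the paper's: the paper has exactly the same gap, hidden in the unjustified sentence that the escaping sequence $\{z_m\}$ ``contains infinitely many different points of the set $B\setminus\{y\}$''. If $\{z_m\}$ takes only finitely many values, then some value $v$ occurs infinitely often and one only gets $z\in\overline{\{v\}}$ with $z\ne v$, which is no contradiction without $T_1$. Indeed, with the paper's (Engelking's) notion of accumulation point ($x\in\overline{B\setminus\{x\}}$), the statement fails for general spaces: let $X=\{b_k\colon k\in\mathbb{N}\}\cup\{y_k\colon k\in\mathbb{N}\}$ carry the topology generated by the base $\{\{b_k\}\colon k\in\mathbb{N}\}\cup\{\{b_k,y_k\}\colon k\in\mathbb{N}\}$. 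This space is first countable, hence sequential; its only convergence is the ``fake'' one, a constant sequence at $b_k$ converging to $y_k$. The set $D=\{b_k\colon k\in\mathbb{N}\}$ of isolated points is dense, and every infinite $V\subseteq D$ has an accumulation point, namely $y_k$ for any $b_k\in V$, so $X$ is countably pracompact. But every dense subset contains $D$, and the sequence $\{b_k\}$ has no convergent subsequence, since every point of $X$ has a smallest neighborhood containing at most one $b_k$; hence $X$ is not sequentially pracompact. So the $T_1$ hypothesis you single out is a genuine implicit assumption of the proposition; under it (points closed), your proof and the paper's are both correct and essentially identical.
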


\begin{proof} Let $X$ be a sequential countably pracompact space. There
exists a dense subset $D$ of the space $X$ such that
each infinite subset of the set $D$ has an accumulation point in $X$.
Let $\{x_n\colon n\in\mathbb{N}\}$ be a sequence of points of the
set $D$. If there exists a point $x\in X$ such that $x\in\overline{\{x_n\}}$ for infinitely many
indices $n\in\mathbb{N}$ then the $\{x_n:x_n=x\}$ is a
convergent subsequence of the sequence $\{x_n\colon n\in\mathbb{N}\}$.
So we suppose that there is no such point $x$. Then the set
$B=\{x_n\colon n\in\mathbb{N}\}$ is infinite.
The set $B$ has an accumulation point $y$ in $X$. Then $y\in\overline{B\setminus\{y\}}$.
Therefore the set $B\setminus\{y\}$ is not sequentially closed and there exists a sequence
$\{z_m\colon m\in\mathbb{N}\}$ of points of the set $B\setminus\{y\}$ converging to a
point $z\not\in B\setminus\{y\}$. Then the sequence
$\{z_m\colon m\in\mathbb{N}\}$ contains infinitely many different points of the set
$B\setminus\{y\}$.
\end{proof}

\begin{proposition} Each countably pracompact $k$-space $X$ is totally countably pracompact.
\end{proposition}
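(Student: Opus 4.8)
The plan is to show that the \emph{same} dense set $D$ witnessing countable pracompactness also witnesses total countable pracompactness. Recall that $X$ is Hausdorff (hence $T_1$) because it is a $k$-space. First I would fix such a $D$ and an arbitrary sequence $\{x_n\}$ of points of $D$. If some value is attained infinitely often, a constant subsequence has closure equal to a singleton, which is compact, so I may pass to a subsequence and assume the $x_n$ are pairwise distinct; write $A=\{x_n\colon n\in\mathbb{N}\}$. The crucial reduction is this: it suffices to produce a \emph{single} compact set $K\subseteq X$ with $A\cap K$ infinite. Indeed, then $A\cap K$ is the set of values of a subsequence of $\{x_n\}$, and $\overline{A\cap K}$ is a closed subset of the compact (hence closed, since $X$ is Hausdorff) set $K$, so $\overline{A\cap K}$ is compact, as required.

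Next I would manufacture a non-closed infinite subset of $A$. Since $A$ is an infinite subset of $D$, countable compactness at $D$ supplies an accumulation point $p$ of $A$ in $X$. Put $A_0=A\setminus\{p\}$. Because $X$ is $T_1$, every neighbourhood of $p$ still meets $A_0$ in infinitely many points, so $p$ remains an accumulation point of $A_0$ while $p\notin A_0$. Consequently $p\in\overline{A_0}\setminus A_0$, i.e. $A_0$ is not closed in $X$.

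Finally I would feed $A_0$ into the $k$-space definition. As $A_0$ is not closed, the defining property of a $k$-space fails for $A_0$, so there is a compact subspace $K\subseteq X$ for which $A_0\cap K$ is not closed in $K$. Pick $q\in\overline{A_0\cap K}\setminus(A_0\cap K)$; since $K$ is closed in $X$ we have $q\in K$, and $q$ is an accumulation point of $A_0\cap K$. In a $T_1$ space an accumulation point forces the set to be infinite, so $A_0\cap K\subseteq A$ is infinite and contained in the compact set $K$. By the reduction of the first paragraph this finishes the argument. I expect the only real subtlety---the ``hard part''---to be this last move: recognizing that the $k$-space closedness criterion is exactly the device that upgrades the mere \emph{existence} of an accumulation point (coming from countable pracompactness) into the much stronger statement that infinitely many points of $A$ already lie inside one compact set. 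The rest is routine point-set bookkeeping using only that $X$ is Hausdorff.
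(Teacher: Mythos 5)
Your proof is correct and is essentially the paper's own argument: after disposing of the case of a constant subsequence, both use the accumulation point supplied by countable pracompactness to produce a non-closed subset of the sequence's range, invoke the $k$-space property to obtain a compact $K$ whose trace on that set is not closed (hence infinite, by $T_1$), and conclude that the subsequence lying in $K$ has compact closure inside $K$. The only differences are cosmetic (passing to a pairwise distinct subsequence and isolating the reduction step explicitly).
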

\begin{proof} There exists a dense subset $D$ of the space $X$ such that
each infinite subset of the set $D$ has an accumulation point in $X$.
Let $\{x_n\colon n\in\mathbb{N}\}$ be a sequence of points of the
set $D$. Put $B=\{x_n\colon n\in\mathbb{N}\}$. If the set $B$ is finite then
there exists a point $x\in X$ such that $x_n=x$ for infinitely many
indices $n\in\mathbb{N}$. Then a subsequence $\{x_n:x_n=x\}$
of the sequence $\{x_n\colon n\in\mathbb{N}\}$ has compact closure $\{x\}$ in
$X$. So we suppose that the set $B$ is infinite.
The set $B$ has an accumulation point $y$ in $X$.
Then $y\in\overline{B\setminus\{y\}}$.
Therefore the set $B\setminus\{y\}$ is not closed
and there exists a compact subset $K$ of the space $X$ such that a set $B\cap K$ is
not closed in $K$. Then the set $B\cap K$ is infinite, the sequence
$\{x_n:x_n\in B\cap K\}$ is infinite too and $\overline{\{x_n:x_n\in B\cap K\}}\subset K$.
\end{proof}

\begin{proposition} Each sequentially feebly compact space containing a dense set
$D$ of isolated points is sequentially pracompact.
\end{proposition}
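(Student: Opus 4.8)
The plan is to show that the given dense set $D$ of isolated points itself witnesses sequential pracompactness; that is, I would verify that every sequence of points of $D$ has a convergent subsequence in $X$. The key observation is that, since the points of $D$ are isolated, singletons formed from them are open, so any sequence in $D$ gives rise directly to a sequence of non-empty open sets to which the sequential feeble compactness of $X$ can be applied.

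First I would fix an arbitrary sequence $\{x_n\colon n\in\mathbb{N}\}$ of points of $D$. Because each $x_n$ is an isolated point of $X$, the singleton $U_n=\{x_n\}$ is a non-empty open subset of $X$. Thus $\{U_n\colon n\in\mathbb{N}\}$ is a sequence of non-empty open subsets of $X$, and sequential feeble compactness supplies a point $x\in X$ and an infinite set $I\subset\mathbb{N}$ such that for each neighborhood $U$ of $x$ the set $\{n\in I\colon U_n\cap U=\varnothing\}$ is finite. The next step is to reinterpret this in terms of the points themselves: for any neighborhood $U$ of $x$ one has $U_n\cap U=\{x_n\}\cap U=\varnothing$ exactly when $x_n\notin U$, so the set $\{n\in I\colon x_n\notin U\}$ is finite for every neighborhood $U$ of $x$. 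Enumerating $I$ as $\{n_1<n_2<\cdots\}$, this says precisely that the subsequence $\{x_{n_k}\colon k\in\mathbb{N}\}$ converges to $x$, since every neighborhood of $x$ contains all but finitely many of its terms.

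I do not expect any genuine obstacle here, as the argument is an essentially immediate application of the definitions once the sequence of singletons is chosen as the right object. The only point deserving a word of care is that the $x_n$ need not be distinct, but this causes no trouble: the sets $U_n$ remain well-defined non-empty open sets even if some coincide, and sequential feeble compactness is formulated for arbitrary sequences of non-empty open sets. The whole content is that isolatedness converts the statement ``every neighborhood of $x$ meets $U_n$ for all but finitely many $n\in I$'' into ``$x_n\in U$ for all but finitely many $n\in I$'', which is exactly the convergence of the corresponding subsequence to $x$.
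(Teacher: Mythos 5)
Your proposal is correct and follows exactly the route the paper intends: the paper's proof simply asserts that ``it is easy to check that each sequence of points of the set $D$ has a convergent subsequence,'' and your argument---turning the sequence of isolated points into a sequence of open singletons, applying sequential feeble compactness, and noting that $\{x_n\}\cap U=\varnothing$ means $x_n\notin U$---is the natural and correct filling-in of that assertion.
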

\begin{proof} It is easy to check that each sequence of points of the set $D$
has a convergent subsequence.
\end{proof}

\section{Examples}\label{Sec:Examples}

\begin{example}\label{Hedgehog}
Let $X_0$ be a
non-empty $T_1$ space. Determine a topology on the set
$X=(X_0\times\omega)\cup\{y_0\}$, where $y_0\not\in X_0\times\omega$ by the following base

$$\mathscr{B}= \left\{U\times \{n\}\colon U\hbox{~is an open subset of the space~}X_0, n\in\omega
\right\}\cup$$
$$    \bigcup\left\{\{y_0\}\cup\bigcup_{m\ge n}X_0\times \{m\}\setminus F_m\colon n\in\omega, F_m \hbox{~is a finite subset of~} X_0 \hbox{~for each~}m\in\omega \hbox{~such that~} m\ge
n\right\}.$$

It is easy to check the following:
\begin{itemize}
  \item the space $X$ is Hausdorff provided the space $X_0$ is Hausdorff;
  \item the space $X$ is feebly compact provided the space $X_0$ is a feebly compact space without isolated points;
  \item the space $X$ is sequentially feebly compact provided the space $X_0$ is a sequentially feebly compact space without isolated points.
\end{itemize}

Now we take the standard unit segment $[0,1]$ as $X_0$. Then $X$ is a sequentially feebly compact
space, containing a closed discrete infinite subspace $\{1\}\times\omega$.
Now for each $n\in\omega$ put $U_n=X_0\times\{n\}$. Let $\{x_n\}$ be a sequence of
points of the space $X$ such that $x_n\in U_n$. Then the set $\{x_n\}$ has no accumulation points,
so the space $X$ is not selectively feebly compact.
\end{example}

We recall that the Stone-\v{C}ech compactification of a Tychonoff space $X$ is a
compact Hausdorff space $\beta X$ containing $X$ as a dense subspace so that each continuous map
$f\colon X\rightarrow Y$ to a compact Hausdorff space $Y$
extends to a continuous map $\overline{f}\colon \beta X\rightarrow Y$ \cite{Engelking1989}.

\begin{example}\label{Example2} (see ~\cite[Exer. 3.6.I]{Engelking1989}, cf.~\cite[Ex. 2.6]{Dorantes-AldamaShakhmatov}.) Let $\{N_\alpha\}_{\alpha\in A}$,
where $A\cap\mathbb N=\varnothing$, be an infinite family of infinite subsets of $\mathbb N$
such that the intersection $N_\alpha\cap N_\beta$ is finite for every pair $\alpha,\beta$ of
distinct elements of $A$ and that $\{N_\alpha\}_{\alpha\in A}$ is maximal with respect to
the last property. Generate a topology on the set $X=\mathbb N\cup S$ by the neighborhood
system $\{\mathcal B(x)\}_{x\in X}$, where $\mathcal B(x)=\{\{n\}\}$, if $x=n\in\mathbb N$
and $\mathcal B(x)=\{\{\alpha\}\cup (N_\alpha\setminus\{1,2,\dots,n\})\}_{n=1}^\infty$ if
$x=\alpha\in A$.

Since $A$ is a closed discrete infinite subset of $X$, $X$ is not countably compact.
On the other hand, the set $D=\Bbb N$ is dense in $X$. Let $\{x_n\colon n\in\mathbb{N}\}$
be an arbitrary sequence of points of the set $D$. If the \emph{set} $S=\{x_n\colon n\in\mathbb{N}\}$
is finite then the sequence $\{x_n\colon n\in\mathbb{N}\}$ has a constant subsequence.
If the set $S$ is infinite then by maximality of $A$ there exists $\alpha\in A$ such that
$N_\alpha\cap S$ is infinite. Note that the enumeration $\{x_{n_k}: k\in\mathbb N\}$ of $N_\alpha\cap S$
in the increasing order is a subsequence of the sequence
$\{x_n\colon n\in\mathbb{N}\}$ converging to the point $\alpha$.
Thus the space $X$ is sequentially pracompact.
\end{example}

\begin{example}\label{Example3}
Endow the set $\mathbb{N}$ with the discrete topology.
Let $\mathscr{A}(\mathbb{N})=\mathbb{N}\cup\{\infty\}$ be a one-point Alexandroff compactification
of $\mathbb{N}$ with the remainder $\infty$. We define on $\mathscr{A}(\mathbb{N})\times\mathbb{N}$
the product topology $\tau_p$ and extend the topology $\tau_p$ onto
$X=\mathscr{A}(\mathbb{N})\times\mathbb{N}\cup\{a\}$,
where $a\notin\mathscr{A}(\mathbb{N})\times\mathbb{N}$, to a topology $\tau^*$ in the following way:
bases of the topologies $\tau_p$ and $\tau^*$ coincide at $x$ for any $x\in\mathscr{A}(\mathbb{N})\times\mathbb{N}$ and the family
\begin{equation*}
    \mathscr{B}^*(a)=\left\{U_a(i_1,\ldots,i_n)\colon i_1,\ldots,i_n\in\mathbb{N}\right\},
\end{equation*}
where
\begin{equation*}
    U_a(i_1,\ldots,i_n)=X\setminus\left(\left(\{\infty\}\times\mathbb{N}\right)\cup\left(\mathscr{A}(\mathbb{N})\times\left\{ i_1,\ldots,i_n\right\}\right)\right),
\end{equation*}
determines a  set of neighbourhood systems for $\tau^*$ at the point $a$.

The definition of the topology $\tau^*$ on $X$ implies that $\mathbb{N}\times\mathbb{N}$ is the maximum discrete subspace of $(X,\tau^*)$ and $\mathbb{N}\times\mathbb{N}$ is dense in $(X,\tau^*)$. Hence every dense subset $D$ of $(X,\tau^*)$ contains $\mathbb{N}\times\mathbb{N}$. But $\overline{\mathbb{N}\times\mathbb{N}}=X$ is not compact, and hence $(X,\tau^*)$ is not an $\omega$-bounded-pracompact space.

Now we shall show that $(X,\tau^*)$ is totally countably pracompact. Especially we shall prove that $\mathbb{N}\times\mathbb{N}$ is the requested dense subset of the space $(X,\tau^*)$. Fix an arbitrary sequence $\left\{x_n\right\}_{n\in\mathbb{N}}\subset \mathbb{N}\times\mathbb{N}$. If there exists a positive integer $i$ such that the set $\left\{x_n\right\}_{n\in\mathbb{N}}\cap\left(\mathscr{A}(\mathbb{N})\times\left\{ i\right\}\right)$ is infinite then the subsequence $\left\{x_{i_j}\right\}_{j\in\mathbb{N}}=\left\{x_n\right\}_{n\in\mathbb{N}}\cap\left(\mathscr{A}(\mathbb{N})\times\left\{ i\right\}\right)$ with the corresponding renumbering has compact closure in $(X,\tau^*)$. In the other case the set $\left\{x_n\right\}_{n\in\mathbb{N}}\cap\left(\mathscr{A}(\mathbb{N})\times\left\{ i\right\}\right)$ is finite for any  positive integer $i$. Then the definition of $(X,\tau^*)$ implies that $\overline{\left\{x_n\right\}_{n\in\mathbb{N}}}=\{a\}\cup\left\{x_n\right\}_{n\in\mathbb{N}}$ is a compact subset of $(X,\tau^*)$.

We observe that by Proposition~19 of \cite{Gutik-Pavlyk=2013a}, $(X,\tau^*)$ is Hausdorff non-semiregular countably pracompact non-countably compact space, and hence $(X,\tau^*)$ is not totally countably compact.
\end{example}

\section*{Acknowledgements}

The authors thank to Paolo Lipparini for references and to the referee for more references and
valuable remarks and suggestions.

\end{document}